\newtheorem{theorem}{Theorem}[section]
\newtheorem{definition}{Definition}
\newtheorem{remark}{Remark}
\newtheorem{corollary}{Corollary}[section]
\numberwithin{equation}{section}
 \def\@evenhead{\vbox{\hbox to \textwidth{\thepage\hfil\sl\leftmark\strut}\hrule}}
 \def\@oddhead{\vbox{\hbox to \textwidth{\rightmark\hfill\thepage\strut}\hrule}}
\def\Z{{\mathbb Z}}
\def\cd{\protect\operatorname{cd}}
\def\cat{\protect\operatorname{cat}}
\def\dcat{\protect\operatorname{dcat}}
\def\dTC{\protect\operatorname{dTC}}
\def\im{\protect\operatorname{im}}
\title{Small Value of cohomological dimension of group homomorphisms}
\author{Nursultan Kuanyshov}
\address{Nursultan Kuanyshov, Suleyman Demirel University, Kaskelen, Kazakhstan}
\email{nursultan.kuanyshov@sdu.edu.kz, kuanyshov.nursultan@gmail.com}
\subjclass[2010]{Primary 20J06; Secondary 50M30}
\keywords{Cohomological dimension, Lusternik-Schnirelmann category, geometric dimension, group homomorphisms}
\begin{document}
\maketitle

\begin{abstract}
We give the characterisation of cohomological dimension of group homomorphisms. In particular, we show the analogue of the Stallings-Swan theorem in the case of group epimorphisms. 
\end{abstract}

\section{Introduction}

The concept of cohomological dimension $\mathrm{cd}(\Gamma)$ of a discrete group $\Gamma$ dates back to the mid-20th century and has played a central role in algebraic topology and geometric group theory. Foundational results in the subject are collected in the classical works of Bieri \cite{Bi} and Brown \cite{Br}.

The \emph{cohomological dimension of a group homomorphism} $\phi: \Gamma \to \Lambda$, denoted $\mathrm{cd}(\phi)$, was first proposed by Mark Grant in a 2013 MathOverflow discussion \cite{Gr}. While this notion naturally generalizes the classical cohomological dimension of groups, it remained largely unexplored until recent developments. Partial results appeared in \cite{Sc, DK}, but a systematic framework was provided only recently by Dranishnikov and De Saha \cite{DD}, who introduced homological and geometric tools suitable for studying $\cd(\phi)$.

In the group case, small values of cohomological dimension are strongly tied to structural properties. The Stallings-Swan theorem characterizes groups of cohomological dimension one as free, while the Eilenberg-Ganea theorem relates cohomological and geometric dimension, with the exceptional case $\cd(\Gamma) = 2$ forming the well-known Eilenberg-Ganea conjecture. However, both results fail in the homomorphism setting: joint work with Dranishnikov \cite{DK} constructed epimorphisms $\phi: \Gamma \to \Lambda$ with $\mathrm{cd}(\phi) = 1$ and $\mathrm{gd}(\phi) = 2$, and with $\mathrm{cd}(\phi) = 2$ and $\mathrm{gd}(\phi) = 3$. In \cite{DD}, it is shown that for any $n \geq 2$, there exist homomorphisms with $\mathrm{cd}(\phi) = n$ and $\mathrm{gd}(\phi) = n+1$, establishing that the divergence between cohomological and geometric dimensions persists at all levels by 1.

An important distinction between the classical and homomorphism settings is the behavior of groups with finite elements. For those groups, $\mathrm{cd}(\Gamma) = \infty$, necessitating the use of \emph{virtual cohomological dimension} to study them. In contrast, homomorphisms between finite groups can have \emph{finite} cohomological dimension. In this paper, we give explicit examples of such homomorphisms. As a result, there is no need to define the virtual cohomological dimension for group homomorphisms, marking a conceptual simplification relative to classical group cohomology.

Despite the failure of classical classification theorems, the study of small cohomological dimensions for homomorphisms still leads to rigidity and structural results. In this paper, we focus on \emph{group epimorphisms} $\phi: \Gamma \twoheadrightarrow \Lambda$ between groups, and we prove the following two theorems:

\medskip

\noindent\textbf{Theorem 1.3} \label{char. of group hom}
\emph{Let $\phi: \Gamma \twoheadrightarrow \Lambda$ be a group epimorphism. Then}
$$
\cd(\phi) = 0 \,\, \text{if and only if} \,\, \phi \,\, \text{is the trivial homomorphism.}
$$

\medskip

This result confirms that cohomological dimension zero imposes strong rigidity in the homomorphism setting, mirroring the classical case where $\mathrm{cd}(\Gamma) = 0$ if and only if $\Gamma$ is trivial.

Our second main result addresses the next level, $\mathrm{cd}(\phi) = 1$, and connects it with the \emph{Lusternik-Schnirelmann category of a homomorphism}, denoted $\mathrm{cat}(\phi)$, as introduced in \cite{DK}.

\medskip

\noindent\textbf{Theorem 2.}  
\emph{Let $\phi: \Gamma \twoheadrightarrow \Lambda$ be a group epimorphism. Then}
$$
\cat(\phi) = \cd(\phi) = 1 \,\, \text{if and only if} \,\,  \phi\,\, \text{factors through the free groups} 
$$
$$ \text{via epimorphisms} \, \,  \Gamma\to F_{n}\to \Lambda.
$$

\medskip

This theorem can be seen as a homomorphism-level analogue of the Stallings-Swan theorem. It shows that the condition $\cat(\phi)=\cd(\phi)=1$ captures a certain freeness - not necessarily of the group $\Gamma$ itself, but of the \emph{factorization} of the map $\phi$ through a free object in the category of groups. 

Our methods combine several tools: the \emph{Berstein-Schwarz class} is developed by Dranishnikov and Rudyak \cite{DR}; the \emph{projective resolution} constructed by Dranishnikov and De Saha \cite{DD}; and classical cohomological techniques from Brown's book \cite{Br}, particularly extension theory and dimension shifting.

In the paper, we use the notation $H^*(\Gamma, A)$ for the cohomology of a group $\Gamma$ with coefficient in $\Gamma$-module $A$. The cohomology groups of a space $X$ with the fundamental group $\Gamma$ we denote as $H^*(X;A)$. Thus, $H^*(\Gamma,A)=H^*(B\Gamma;A)$ where $B\Gamma=K(\Gamma,1)$, a classifying space (Eilenberg-MacLane space): a path-connected space such that $\pi_{1}(B\Gamma)=\Gamma$ and $\pi_{k}(B\Gamma)=0$ for all $k\neq 1.$

\section{Preliminaries}

We recall our tools and  classical numerical invariants such as Lusternik-Schnirelmann category, topological complexity, and sequential topological complexity in this section.

\subsection{Berstein-Schwarz cohomology class}

The Berstein-Schwarz class of a discrete group $\Gamma$ is the first obstruction $\beta_{\Gamma}$ to a lift of $B\Gamma=K(\Gamma,1)$ to the universal covering $E\Gamma$. 
Note that $\beta_{\Gamma}\in H^1(\Gamma,I(\Gamma))$ where $I(\Gamma)$ is the augmentation ideal of the group ring $\Z\Gamma$~\cite{Be},\cite{DR}.

\begin{theorem}[Universality~\cite{DR,Sch}]\label{universal}
For any cohomology class $\alpha\in H^k(\Gamma,L)$, there is a homomorphism of $\Gamma$-modules $I(\Gamma)^{\otimes k}\to L$ such that the induced homomorphism for cohomology takes $(\beta_{\Gamma})^k\in H^k(\Gamma,I(\Gamma)^{\otimes k})$ to $\alpha$,  where $I(\Gamma)^{\otimes k}=I(\Gamma)\otimes\dots\otimes I(\Gamma)$ and $(\beta_{\Gamma})^k=\beta_{\Gamma}\smile\dots\smile\beta_{\Gamma}$.
\end{theorem}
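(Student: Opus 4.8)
The plan is to prove universality by realising $(\beta_{\Gamma})^{k}$ as the tautological class of a concrete free resolution of $\Z$ over $\Z\Gamma$ whose $k$-th syzygy is $I(\Gamma)^{\otimes k}$, and then reading the statement off the standard syzygy description of group cohomology (dimension shifting, as in \cite{Br}). Write $I=I(\Gamma)$ and let $\varepsilon\colon\Z\Gamma\to\Z$ be the augmentation, so that $0\to I\to\Z\Gamma\xrightarrow{\varepsilon}\Z\to 0$ is exact and $I$ is free as an abelian group, on $\{g-1\mid g\neq 1\}$. Two elementary facts drive the construction: for a $\Z\Gamma$-module $N$ that is free as an abelian group, $N\otimes_{\Z}\Z\Gamma$ with the diagonal action is a free $\Z\Gamma$-module (via $n\otimes g\mapsto g^{-1}n\otimes g$); and tensoring the augmentation sequence over $\Z$ with the $\Z$-free module $I^{\otimes j}$ preserves exactness. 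Splicing the resulting short exact sequences $0\to I^{\otimes j}\to I^{\otimes(j-1)}\otimes\Z\Gamma\to I^{\otimes(j-1)}\to 0$ gives a free resolution
\[
\cdots\longrightarrow I^{\otimes 2}\otimes\Z\Gamma\longrightarrow I\otimes\Z\Gamma\longrightarrow \Z\Gamma\xrightarrow{\varepsilon}\Z\longrightarrow 0,
\]
with $P_{0}=\Z\Gamma$ and $P_{j}=I^{\otimes j}\otimes\Z\Gamma$; writing $d_{0}=\varepsilon$, the differential $d_{j}\colon P_{j}\to P_{j-1}$ is the surjection $c_{j}\colon I^{\otimes j}\otimes\Z\Gamma\to I^{\otimes j}$ (apply $\varepsilon$ to the $\Z\Gamma$-factor) followed by the inclusion $I^{\otimes j}=I^{\otimes(j-1)}\otimes I\hookrightarrow I^{\otimes(j-1)}\otimes\Z\Gamma$, so that the $k$-th syzygy $\Ker(d_{k-1})$ is canonically $I^{\otimes k}$.

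From this resolution, the syzygy description of cohomology supplies, for every $\Z\Gamma$-module $M$, a surjection $\operatorname{Hom}_{\Z\Gamma}(I^{\otimes k},M)\twoheadrightarrow H^{k}(\Gamma,M)$ carrying a coefficient homomorphism $f$ to $f_{*}(u_{k})$, where $u_{k}\in H^{k}(\Gamma,I^{\otimes k})$ is the class of the cocycle $c_{k}$, equivalently of $\mathrm{id}_{I^{\otimes k}}$. Thus everything reduces to the identity $(\beta_{\Gamma})^{k}=\pm u_{k}$. For $k=1$ this is immediate from the definition of $\beta_{\Gamma}$ as the image of $1$ under the connecting homomorphism $\delta_{0}\colon H^{0}(\Gamma,\Z)\to H^{1}(\Gamma,I)$ of the augmentation sequence: lifting the cocycle $\varepsilon$ through $\varepsilon$ by $\mathrm{id}_{\Z\Gamma}$ and composing with $d_{1}$ returns exactly $c_{1}$, so $\beta_{\Gamma}=u_{1}$. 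For the inductive step, the short exact sequence $0\to I^{\otimes k}\to P_{k-1}\xrightarrow{c_{k-1}}I^{\otimes(k-1)}\to 0$ (the augmentation sequence tensored with $I^{\otimes(k-1)}$) has connecting homomorphism $\delta\colon H^{k-1}(\Gamma,I^{\otimes(k-1)})\to H^{k}(\Gamma,I^{\otimes k})$ sending $u_{k-1}$ to $u_{k}$, by the same lifting computation. Combining this with the standard compatibility of the cup product with connecting homomorphisms---namely $x\smile\beta_{\Gamma}=\pm\delta(x)$ whenever $\beta_{\Gamma}=\delta_{0}(1)$, see \cite{Br}---yields $(\beta_{\Gamma})^{k}=(\beta_{\Gamma})^{k-1}\smile\beta_{\Gamma}=\pm u_{k-1}\smile\beta_{\Gamma}=\pm\delta(u_{k-1})=\pm u_{k}$ by induction. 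Hence for any $\alpha\in H^{k}(\Gamma,L)$ one picks a $\Z\Gamma$-homomorphism $f\colon I^{\otimes k}\to L$ with $f_{*}(u_{k})=\alpha$, and then $\pm f$ sends $(\beta_{\Gamma})^{k}$ to $\alpha$, which is the assertion.

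The step I expect to be the main obstacle is the last identity $x\smile\beta_{\Gamma}=\pm\delta(x)$, together with the bookkeeping it requires: one must check that tensoring the augmentation sequence with an arbitrary coefficient module still yields a short exact sequence (which works here only because $I$, $\Z\Gamma$ and $\Z$ are free as abelian groups) and keep the identifications $I^{\otimes(k-1)}\otimes I\cong I^{\otimes k}$ and the attendant signs coherent throughout the induction; the remainder is routine homological algebra over $\Z\Gamma$. The same result can alternatively be phrased purely in obstruction-theoretic terms, which is the route taken in \cite{DR,Sch}.
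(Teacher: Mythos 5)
Your proof is correct and follows essentially the same route as the source the paper cites for this theorem (Dranishnikov--Rudyak): build the free resolution $P_j=I^{\otimes j}\otimes\Z\Gamma$ with diagonal action, identify $H^k(\Gamma,M)$ as a quotient of $\operatorname{Hom}_\Gamma(I^{\otimes k},M)$, and show by induction via connecting homomorphisms that $(\beta_\Gamma)^k$ is the tautological class $u_k$; the paper itself offers no proof, only the citation. The only point worth making explicit is the reconciliation of the obstruction-theoretic definition of $\beta_\Gamma$ used in the statement with the algebraic one $\beta_\Gamma=\delta_0(1)$ you work with, which is standard and is implicitly used later in the paper when $\beta_\Lambda$ is represented by the cocycle $\partial'_1$.
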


\subsection{Projective resolution of Dranishnikov and De Saha \cite{DD}}

Let $\Gamma$ be a discrete group. A projective resolution $P_{*}(\Gamma)$ of $\Z$ for the group $\Gamma$ is an exact sequence of projective $\Gamma-$ modules over $\Z$:
$$\cdots\to P_{k}(\Gamma)\to P_{k-1}(\Gamma)\to\cdots P_{2}(\Gamma)\to P_{1}\to \Z\Gamma\to \Z.$$

The cohomology of $\Gamma$ with the coefficients in a $\Gamma$-module M can be defined as homology of the cochain complexe $Hom_{\Gamma}(P_{*}(\Gamma),M)$.

We recall the {\em cohomological dimension} $\cd(\phi)$ of a group homomorphism $\phi:\Gamma\to\Lambda$ was introduced by Mark Grant~\cite{Gr} as the maximum of $k$ such that
there is a $\Lambda-$module $M$ with  the nonzero  induced  homomorphism $\phi^*:H^k(\Lambda,M)\to H^k(\Gamma,M)$.

The following Theorem of Dranishnikov and De Saha gives the characterization of group homomorphism in term of the projective resolutions. 

\begin{theorem}[\cite{DD}] Let $\phi:\Gamma\to\Lambda$ be a group homomorphism with $\cd(\phi)=k$ and let $\phi_{*}:(P_{*}(\Gamma),\partial_{*})\to (P_{*}(\Gamma),\partial'_{*})$ be the chain map between projective resolutions of $\Z$ for $\Gamma$ and $\Lambda$ induced by $\phi$. Then $\phi_{*}$ is chain homotopic to $\psi_{*}:P_{*}(\Gamma)\to P_{*}(\Lambda)$ with $\psi_{n}=0$ for $n>k.$
\end{theorem}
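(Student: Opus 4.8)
The plan is to build the chain homotopy by hand and to isolate the one place where the hypothesis on $\cd(\phi)$ is used. Write $\partial'$ for the differentials of $P_*(\Lambda)$, and let $Z_n\subseteq P_n(\Gamma)$ and $Z'_n\subseteq P_n(\Lambda)$ be the submodules of $n$-cycles, with $Z_0=\ker\bigl(P_0(\Gamma)\to\Z\bigr)$ and $Z'_0=\ker\bigl(P_0(\Lambda)\to\Z\bigr)$; exactness of the resolutions gives $Z_n=\im\partial_{n+1}$ and $Z'_n=\im\partial'_{n+1}$ for all $n\ge 0$. I want $\Z\Gamma$-maps $D_n\colon P_n(\Gamma)\to P_{n+1}(\Lambda)$ such that $\psi_n:=\phi_n-\partial'_{n+1}D_n-D_{n-1}\partial_n$ vanishes for $n>k$; then $\psi_*$ is a chain map chain-homotopic to $\phi_*$ and we are done. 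Set $D_n=0$ for $n<k$. Everything then reduces to producing one map $D_k$ with
\[
(\star)\qquad \psi_k:=\phi_k-\partial'_{k+1}D_k\ \text{vanishes on}\ Z_k.
\]

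Granting $(\star)$, the remaining $D_n$ are produced one degree at a time. Making $\psi_n=0$ ($n>k$) amounts to solving $\partial'_{n+1}D_n=\phi_n-D_{n-1}\partial_n$; since $P_n(\Gamma)$ is projective and $\partial'_{n+1}$ surjects onto $Z'_n$, a solution exists as soon as $\partial'_n(\phi_n-D_{n-1}\partial_n)=0$. By the chain-map identity $\partial'_n\phi_n=\phi_{n-1}\partial_n$ this is the condition $(\phi_{n-1}-\partial'_nD_{n-1})\partial_n=0$. For $n=k+1$ the left side is $\psi_k\partial_{k+1}$, which vanishes by $(\star)$ because $\im\partial_{k+1}=Z_k$. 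For $n\ge k+2$ we have already arranged $\psi_{n-1}=0$, so $\phi_{n-1}-\partial'_nD_{n-1}=D_{n-2}\partial_{n-1}$ and the expression is $D_{n-2}\partial_{n-1}\partial_n=0$. This induction is entirely formal. (The case $k=0$ is the same, reading $Z_0$ for the augmentation ideal and $\partial_0$ for the augmentation.)

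The content is $(\star)$, and this is where $\cd(\phi)=k$ enters. The chain-map identity shows $\phi_k(Z_k)\subseteq Z'_k$, so $(\star)$ asks whether $u:=\phi_k|_{Z_k}\colon Z_k\to Z'_k$ extends to a $\Z\Gamma$-homomorphism $\tilde u\colon P_k(\Gamma)\to Z'_k$; given $\tilde u$, lifting it through the surjection $\partial'_{k+1}\colon P_{k+1}(\Lambda)\to Z'_k$ (projectivity of $P_k(\Gamma)$) produces $D_k$. The obstruction to extending $u$ lies in a single group: dimension shifting along $\operatorname{Hom}_{\Z\Gamma}(P_*(\Gamma),Z'_k)$, via the isomorphism $P_{k+1}(\Gamma)/Z_{k+1}\cong Z_k$ induced by $\partial_{k+1}$, gives a natural identification
\[
H^{k+1}(\Gamma,Z'_k)\;\cong\;\operatorname{Hom}_{\Z\Gamma}(Z_k,Z'_k)\big/\bigl\{\,g|_{Z_k}\ :\ g\in\operatorname{Hom}_{\Z\Gamma}(P_k(\Gamma),Z'_k)\,\bigr\},
\]
under which $u$ extends exactly when its class $[u]\in H^{k+1}(\Gamma,Z'_k)$ is zero. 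Since $Z'_k$ is a genuine $\Lambda$-module, the cocycle $\partial'_{k+1}\colon P_{k+1}(\Lambda)\to Z'_k$ represents a class $v\in H^{k+1}(\Lambda,Z'_k)$; because $\phi^*$ on cochain complexes is precomposition with $\phi_*$, the class $\phi^*(v)$ is represented by $\partial'_{k+1}\circ\phi_{k+1}=\phi_k\circ\partial_{k+1}$, which corresponds under the identification above precisely to $u$. Hence $[u]=\phi^*(v)$, and since $\cd(\phi)=k<k+1$ the map $\phi^*\colon H^{k+1}(\Lambda,Z'_k)\to H^{k+1}(\Gamma,Z'_k)$ is zero; therefore $[u]=0$, $(\star)$ holds, and the theorem follows.

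I expect the step requiring the most care to be the displayed identification together with the assertion that, under it, $\phi^*$ is computed by precomposition with $\phi_{k+1}$ and so sends $v$ to the class of $u=\phi_k|_{Z_k}$ — that is, the naturality of dimension shifting with respect to the chain map $\phi_*$. This is standard (the extension theory and dimension shifting in Brown's book supply everything needed), but it should be written out rather than merely asserted. One preliminary remark is also worth recording: the statement is independent of the chosen resolutions, since any two chain maps lifting $\phi$ are chain-homotopic and a degree-preserving chain-homotopy equivalence carries a chain map vanishing above degree $k$ to another such; so one may work with whatever resolutions are convenient, e.g.\ infinite free ones.
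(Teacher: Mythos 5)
The paper does not prove this statement --- it is quoted from \cite{DD} without proof --- so your argument can only be checked on its own terms, and it is correct: setting $D_n=0$ for $n<k$, identifying the sole obstruction as the class of $u=\phi_k|_{Z_k}$ in $H^{k+1}(\Gamma,Z_k')$, recognizing that class as $\phi^*$ of the canonical class represented by $\partial'_{k+1}$, and killing it by $\cd(\phi)=k$, is exactly the standard dimension-shifting proof, and it is the same mechanism used in \cite{DD}. The only points worth writing out in a final version are the ones you already flag (naturality of the dimension-shifting isomorphism under precomposition with $\phi_*$) plus the remark that all maps $D_n$, $\psi_n$ are only required to be $\Z\Gamma$-linear with $P_*(\Lambda)$ viewed as a $\Gamma$-module via $\phi$, which is what makes the projectivity liftings legitimate.
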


\subsection{LS-category and Sequential Topological Complexity}

Let $f:X\to Y$ be a map. Let $X^{r}$ and $Y^{r}$ be the Cartesian product of $r$ copies of $X$ and $Y$ respectively, i.e $X^{r}:=X\times\cdots\times X$ and $Y^{r}:=Y\times\cdots\times Y.$ Let us denote $f^{r}:=f\times\cdots\times f:X^{r}\to Y^{r}$ and elements of $X^{r}$ and $Y^{r}$ are vectors $\bar{x}=(x_{0},\cdots,x_{r-1})$ and $\bar{y}=(y_{0},\cdots,y_{r-1})$ respectively.

\begin{definition}
 The Lusternik-Schnirelmann category of the map f, $\cat(f)$, between X and Y topological spaces is the minimal number k such that X admits an open cover by k+1 open sets $U_{0},U_{1},..., U_{k}$ and restriction of f to each $U_{i}$ is null-homotopic. 
\end{definition}

\begin{definition} Let $f:X\to Y$ be the above map.
\begin{enumerate} 
\item A {\em sequential $f$-motion planner} on a subset $U\subset X^{r}$ is a map $f_{U}:U\to PY$ such that $f_{U}(\bar{x})(\frac{j}{r-1})=f_{U}(x_{0},x_{1},\cdots,x_{r-1})(\frac{j}{r-1})=f(x_{j})$ for all $j=0,\cdots,r-1.$

\item The {\em sequential topological complexity of map f}, denoted $TC_{r}(f),$ is the minimal number $k$ such that $X^{r}$ is covered by $k+1$ open sets $U_{0},\cdots,U_{k}$ on which there are sequential $f$-motion planners. If no such $k$ exists, we set $TC_{r}(f)=\infty.$
\end{enumerate}
\end{definition}
Note that if $r=2,$ we recover Scott's topological complexity for a map \cite{Sc}. Further, if map $f$ is identity on space $X,$ we get Rudyak's sequential topological complexity of space $X$ \cite{Ru}.

Due to the homotopy invariance of classifying spaces, there is a one-to-one correspondence between group homomorphism $\phi:\Gamma\to\Lambda$ and homotopy classes of maps $B\phi:B\Gamma\to B\Lambda$ that induce $\phi$ on the fundamental group. Hence, the following definition is well-defined and equivalent (see more details in \cite{Dr})

\begin{definition}
 Let $\phi:\Gamma\to\Lambda$ be a homomorphism.
 \begin{enumerate}
\item The LS-category of $\phi$, $\cat(\phi),$ is defined to be $\cat(B\phi).$
\item The geometric dimension $gd(\phi)$ of $\phi$ is defined as the minimal $k$ such that $B\phi$ can be deformed to the k-skeleton $B\Lambda^{(k)}$: 
$$gd(\phi)=\min\{n|B\phi \sim  f:B\Gamma\to B\Lambda^{(k)}\}$$
 \end{enumerate}
\end{definition}

\subsection{Ganea-Schwarz's approach to LS-category} Recall that an element of an iterated join $X_0*X_1*\cdots*X_k$ of topological spaces is a formal linear combination $t_0x_0+\cdots +t_k{x_k}$ of points $x_i\in X_i$ with $\sum t_i=1$, $t_i\ge 0$, in which all terms of the form $0x_i$ are dropped. Given fibrations $f_i:X_i\to Y$ for $i=0, ..., k$, the fiberwise join of spaces $X_0, ..., X_k$ is defined to be the space
\[
    X_0*_YX_1*_Y\cdots *_YX_k=\{\ t_0x_0+\cdots +t_k{x_k}\in X_0*\cdots *X_k\ |\ f_0(x_0)=\cdots =f_k(x_k)\ \}.
\]
The fiberwise join of fibrations $f_0, ..., f_n$ is the fibration 
\[
    f_0*_Y*\cdots *_Yf_k: X_0*_YX_1*_Y\cdots *_YX_k \longrightarrow Y
\]
defined by taking a point $t_0x_0+\cdots +t_k{x_k}$ to $f_i(x_i)$ for any $i$ such that $t_i \neq 0$. 

When $X_i=X$ and $f_i=f:X\to Y$ for all $i$  the fiberwise join of spaces is denoted by $*^{k+1}_YX$ and the fiberwise join of fibrations is denoted by $*_Y^{k+1}f$. 

For a path connected space X, we turn an inclusion of a point $i:x_{0}\to X$ into a fibration $p_{0}^{X}:G_{0}(X)\to X$, whose fiber is known to be the loop space $\Omega X$. The n-th Ganea space of X is
defined to be the space $G_{k}(X)=*_{X}^{k+1}G_{0}(X)$, while the k-th Ganea fibration $p_{k}^{X}:G_{k}(X)\to X$ is the fiberwise join $*_{X}^{k+1}p_{0}^{X}$. Then the fiber of $p_{0}^{X}$ is $*^{k+1}\Omega X$.

The following theorem give the Ganea-Shwarz characterization of LS-category of maps \cite{Sc, Sch, DK, CLOT}.

\begin{theorem}\label{GS}
If $f:X\to Y$ is a map between connected normal spaces, then $\cat(f)\leq k$ if and only if
there is a lift of f with respect to the fibration $p_{k}^{Y}:G_{k}(Y)\to Y$ admits a section.
\end{theorem}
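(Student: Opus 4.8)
The plan is to deduce Theorem~\ref{GS} from Schwarz's classical theorem on the sectional category of a fibration. Recall that for a fibration $p\colon E\to Y$ the sectional category $\secat(p)$ is the least $k$ such that $Y$ has an open cover $V_0,\dots,V_k$ over each of which $p$ admits a section, and Schwarz's theorem states that $\secat(p)\le k$ if and only if the $(k+1)$-fold fiberwise join $*_Y^{k+1}p$ admits a global section. I would combine this with the observation that $\cat(f)$ is itself the sectional category of an auxiliary fibration over the source $X$, namely $\cat(f)=\secat(f^{*}p_0^{Y})$.

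First I would reformulate null-homotopy of a restriction in terms of sections. For an open set $U\subseteq X$, the map $f|_U\colon U\to Y$ is null-homotopic if and only if it lifts through the Ganea fibration $p_0^Y\colon G_0(Y)\to Y$: since $G_0(Y)$ is contractible, any lift is itself null-homotopic and hence so is $f|_U$; conversely a null-homotopy of $f|_U$, together with the homotopy lifting property of $p_0^Y$ and the path-connectedness of $Y$, produces a genuine lift of $f|_U$. Equivalently, writing $q:=f^{*}p_0^{Y}\colon f^{*}G_0(Y)\to X$ for the pulled-back fibration, $f|_U$ is null-homotopic precisely when $q$ has a section over $U$. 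Therefore $\cat(f)\le k$ if and only if $X$ is covered by $k+1$ open sets on each of which $q$ admits a section, that is, if and only if $\secat(q)\le k$.

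Next I would apply Schwarz's theorem to $q$, obtaining that $\secat(q)\le k$ if and only if the fiberwise join $*_X^{k+1}q$ admits a section. The concluding step is the natural identification $*_X^{k+1}\bigl(f^{*}p_0^{Y}\bigr)\cong f^{*}\bigl(*_Y^{k+1}p_0^{Y}\bigr)=f^{*}p_k^{Y}$, valid because the fiberwise join construction commutes with pullback; under it, a section of $*_X^{k+1}q$ is exactly a lift of $f$ along $p_k^Y\colon G_k(Y)\to Y$. Stringing the three equivalences together yields $\cat(f)\le k$ if and only if $f$ lifts through $p_k^Y$, which is the assertion of the theorem.

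The steps that are genuinely routine are the homotopy-lifting argument in the first reduction and the compatibility of pullbacks with fiberwise joins. The point I expect to demand the most care is matching the open-cover/local-section bookkeeping with Schwarz's hypotheses: passing from ``$f|_U$ is null-homotopic'' to ``$q$ admits a section over an open set'' and then shrinking and patching covers so as to remain within the setting of Schwarz's theorem is exactly where the normality of $X$ and $Y$ enters, and I would state those point-set lemmas explicitly before invoking them.
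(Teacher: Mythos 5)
The paper does not prove Theorem~\ref{GS}; it is quoted from the literature (\cite{Sc,Sch,DK,CLOT}), and your argument --- identifying $\cat(f)$ with $\secat(f^{*}p_0^{Y})$ via the contractibility of $G_0(Y)$ and the homotopy lifting property, invoking Schwarz's join criterion for sectional category, and using that fiberwise joins commute with pullback --- is precisely the standard proof given there. Your reasoning is correct, including the observation that normality of $X$ is what licenses the partition-of-unity step in Schwarz's theorem, so nothing further is needed.
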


\section{Proof of Main Results}

Given a homomorphism $\phi:\Gamma\to\Lambda$ by $\phi':\Gamma\to\im(\phi)$ we denote the restriction of $\phi$ from the codomain to its range. Joint work with Dranishnikov \cite{DK} established the following result, which tells it suffices to consider an epimorphisms instead of all homomorphisms. 

\begin{theorem}\cite{DK}\label{epimorphism}
    For any homomorphism $\phi:\Gamma\to\Lambda$, $$\cat(\phi)=\cat(\phi)$$ $$\cd(\phi)=\cd(\phi)$$
\end{theorem}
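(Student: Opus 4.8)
The statement should be read as $\cat(\phi)=\cat(\phi')$ and $\cd(\phi)=\cd(\phi')$, where $\phi':\Gamma\to\im(\phi)$ is the corestriction of $\phi$ to its range. The plan is to use the factorization $\phi=j\circ\phi'$, with $j:\im(\phi)\hookrightarrow\Lambda$ the inclusion, and to handle the two invariants separately.

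\emph{Cohomological dimension.} The point is that $\cd$ is detected by the induced chain map of projective resolutions. By the Dranishnikov--De Saha theorem quoted above, if $\cd(\phi)=k$ then the chain map $\phi_*:P_*(\Gamma)\to P_*(\Lambda)$ is chain homotopic to one vanishing in degrees $>k$; conversely, if $\phi_*$ is chain homotopic to such a $\psi_*$, then for every $\Lambda$-module $M$ and every $n>k$ the pullback $\phi^*:H^n(\Lambda;M)\to H^n(\Gamma;M)$ is computed by $\psi_n^*=0$, so $\cd(\phi)\le k$. Hence $\cd(\phi)$ is the least $k$ for which $\phi_*$ admits a chain-homotopy representative vanishing above degree $k$. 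Now, since $\Z\Lambda$ is free as a $\Z\im(\phi)$-module, the restriction of a free resolution $P_*(\Lambda)$ of $\Z$ over $\Z\Lambda$ is again a free resolution of $\Z$ over $\Z\im(\phi)$; choosing this restricted complex as $P_*(\im(\phi))$ and keeping the same $P_*(\Gamma)$ for both maps, a $\Z$-linear chain map $P_*(\Gamma)\to P_*(\Lambda)$ augmenting to the identity of $\Z$ is $\phi$-equivariant precisely when it is $\phi'$-equivariant as a map into the restricted complex, and likewise for chain homotopies. Thus the chain map induced by $\phi$ and that induced by $\phi'$ can be taken to be literally the same underlying $\Z$-linear map, with the same collection of representatives, so their minimal vanishing degrees coincide and $\cd(\phi)=\cd(\phi')$. (Alternatively one could pass to the coinduced module $\Coind_{\im(\phi)}^{\Lambda}N$ and invoke Shapiro's lemma.)

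\emph{LS-category.} Pass to classifying spaces and realize $B(\im(\phi))$ as the cover of $B\Lambda$ corresponding to the subgroup $\im(\phi)$, so that $Bj:B(\im(\phi))\to B\Lambda$ is a covering projection with discrete fibre and $B\phi\simeq (Bj)\circ(B\phi')$. If $\cat(\phi')\le k$, an open cover $U_0,\dots,U_k$ of $B\Gamma$ on which $B\phi'$ is null-homotopic makes each restriction $B\phi|_{U_i}=(Bj)\circ(B\phi'|_{U_i})$ null-homotopic, so $\cat(\phi)\le k$. Conversely, given such a cover for $B\phi$, use the homotopy lifting property of the covering $Bj$ to lift a null-homotopy of $B\phi|_{U_i}$, starting from the lift $B\phi'|_{U_i}$; the terminal stage of the lifted homotopy sends $U_i$ into a single fibre of $Bj$, hence is constant on each path-component of $U_i$ and therefore null-homotopic in the path-connected space $B(\im(\phi))$. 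So $B\phi'|_{U_i}$ is null-homotopic and $\cat(\phi')\le k$; combining the two bounds gives $\cat(\phi)=\cat(\phi')$.

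\emph{Main obstacle.} The two ``$\le$'' inequalities obtained by post-composing with $j$ (resp.\ $Bj$) are routine. The work is in the reverse directions: cohomologically, one must check that restriction along $\Z\im(\phi)\hookrightarrow\Z\Lambda$ preserves exactness (automatic) and projectivity of the resolution (which is exactly where $\Z\Lambda$ being $\Z\im(\phi)$-free enters), and that the chain maps induced by $\phi$ and by $\phi'$ really are the same map; topologically, one must be careful that the homotopy-lifted endpoint map, though only locally constant on $U_i$, is genuinely null-homotopic, which relies on $B(\im(\phi))$ being path-connected. If one instead runs the Shapiro-lemma argument, the corresponding obstacle is a diagram chase verifying that the Shapiro isomorphism $H^*(\Lambda;\Coind_{\im(\phi)}^{\Lambda}N)\cong H^*(\im(\phi);N)$, the counit $\Coind_{\im(\phi)}^{\Lambda}N\to N$, and the pullbacks along $\phi$ and $\phi'$ assemble into a commutative square.
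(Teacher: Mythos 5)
Your reading of the statement is the right one: as printed the theorem is a tautology because of a typo, and the intended claim is $\cat(\phi)=\cat(\phi')$ and $\cd(\phi)=\cd(\phi')$ for the corestriction $\phi':\Gamma\to\im(\phi)$. The paper itself offers no proof here --- it simply cites \cite{DK} --- so there is no internal argument to compare against; judged on its own, your proof is correct and essentially self-contained. The two easy inequalities by post-composition with $j$ and $Bj$ are fine. For $\cd$, your observation that $\Z\Lambda$ is free over $\Z\im(\phi)$ (via coset representatives), so that the restricted resolution is again a projective resolution and $\phi$-equivariance of chain maps and homotopies coincides with $\phi'$-equivariance, is a clean chain-level version of the usual Shapiro-lemma argument; note, though, that you only need the Dranishnikov--De Saha characterization to convert ``minimal vanishing degree'' into $\cd$, and if one worries about circularity (whether \cite{DD} already uses the reduction to epimorphisms to prove that characterization), the coinduced-module route $M=\Coind_{\im\phi}^{\Lambda}N$ with the counit is the safer standard argument for the inequality $\cd(\phi')\le\cd(\phi)$, the other inequality being immediate from $\phi^*=\phi'^*\circ j^*$. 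For $\cat$, the covering-space lifting argument is correct; the one point you should make explicit is that the terminal stage of the lifted homotopy is only \emph{locally} constant on $U_i$, and contracting it to a point requires the path components of $U_i$ to be open --- which holds because $U_i$ is an open subset of a CW model of $B\Gamma$, hence locally path-connected --- in addition to the path-connectedness of $B(\im(\phi))$ that you do mention.
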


By Theorem \ref{epimorphism}, we assume that group homomorphism $\phi:\Gamma\to \Lambda$ is an epimorphism. We get the following characterization of group epimorphism.  

\begin{theorem}\label{char. of group hom}
Let $\phi:\Gamma\to \Lambda$ be an epimorphism of finitely generated groups $\Gamma, \Lambda$. Then $\cd(\phi)=0$ if and only if $\phi$ is the trivial homomorphism.    
\end{theorem}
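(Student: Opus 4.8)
The plan is to analyze $\cd(\phi)=0$ directly from the definition: for \emph{every} $\Lambda$-module $M$, the induced map $\phi^*:H^k(\Lambda,M)\to H^k(\Gamma,M)$ vanishes for all $k\ge 1$. The key observation is that $\phi$ being an epimorphism forces $\phi^*$ to be injective in degree $1$ for a suitably chosen coefficient module, so that vanishing of $\phi^*$ in degree $1$ already imposes a strong constraint. The natural module to test against is $I(\Lambda)$, the augmentation ideal, because the Berstein-Schwarz class $\beta_\Lambda\in H^1(\Lambda,I(\Lambda))$ is the universal $1$-dimensional class by Theorem \ref{universal}. The first step is to show: if $\cd(\phi)=0$, then $\phi^*(\beta_\Lambda)=0$ in $H^1(\Gamma,\phi^\#I(\Lambda))$, where $\phi^\#$ denotes restriction of scalars along $\phi$.

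The second step translates $\phi^*\beta_\Lambda = 0$ into algebra. Recall that $\beta_\Lambda$ is represented by the crossed homomorphism (derivation) $d_\Lambda:\Lambda\to I(\Lambda)$, $\lambda\mapsto \lambda-1$, and that $\phi^*\beta_\Lambda$ is represented by the composite derivation $\gamma\mapsto \phi(\gamma)-1\in I(\Lambda)$, viewed as a $\Gamma$-module via $\phi$. This class is a coboundary precisely when there exists $c\in I(\Lambda)$ with $\phi(\gamma)-1 = \phi(\gamma)c - c$ for all $\gamma\in\Gamma$. Since $\phi$ is surjective, this says $\lambda - 1 = \lambda c - c = (\lambda-1)c$ for \emph{all} $\lambda\in\Lambda$, i.e. $(\lambda-1)(1-c)=0$ in $\Z\Lambda$ for all $\lambda$. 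Now $\Z\Lambda$ is a free $\Z$-module with basis $\Lambda$; writing $1-c = \sum_{\lambda} n_\lambda \lambda$ and expanding $(\mu-1)(1-c)=0$ for a fixed $\mu$ shows that the coefficient function $\lambda\mapsto n_\lambda$ is constant on the coset structure, hence $1-c$ is (a multiple of) the formal sum over $\Lambda$; if $\Lambda$ is infinite this is impossible since $1-c$ has finite support unless $1-c=0$, and $1-c=0$ contradicts $c\in I(\Lambda)$ (as $1\notin I(\Lambda)$). Hence $\Lambda$ must be finite, and then $(\mu-1)(1-c)=0$ for all $\mu$ forces $1-c = n\cdot\Sigma$ where $\Sigma=\sum_{\lambda\in\Lambda}\lambda$; applying the augmentation gives $1 = n|\Lambda|$, so $|\Lambda|=1$ and $\Lambda$ is trivial, i.e. $\phi$ is the trivial homomorphism. (The converse is immediate: if $\Lambda$ is trivial then $H^k(\Lambda,M)=0$ for $k\ge 1$, so $\cd(\phi)=0$.)

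The step I expect to be the main obstacle is the first one --- deducing $\phi^*\beta_\Lambda=0$ from $\cd(\phi)=0$ --- which requires care because $I(\Lambda)$ is a $\Lambda$-module and Theorem \ref{universal} guarantees universality of $\beta_\Lambda$ among classes over $\Lambda$, but the definition of $\cd(\phi)$ quantifies over all $\Lambda$-modules $M$ with $\phi^*:H^k(\Lambda,M)\to H^k(\Gamma,M)$; thus $\cd(\phi)=0$ applied with $M=I(\Lambda)$ and $k=1$ gives exactly $\phi^*\beta_\Lambda=0$, so in fact this step is immediate once one unwinds the definition. An alternative and perhaps cleaner route avoiding the augmentation-ideal bookkeeping is to use the projective resolution description (the theorem of Dranishnikov--De Saha quoted above): $\cd(\phi)=0$ means the chain map $\phi_*:P_*(\Gamma)\to P_*(\Lambda)$ is chain homotopic to one that is zero in all positive degrees, hence in particular the map $\Z\Gamma\to\Z\Lambda$ in degree $0$ (which is the ring homomorphism induced by $\phi$, at least after choosing the bar resolutions) together with a chain homotopy $s_0:\Z\Gamma\to P_1(\Lambda)$ satisfying $\partial_1' s_0 = (\text{induced map}) - 0$ on $\ker(\text{augmentation})$; chasing this identity on the generator $1\in\Z\Gamma$ reproduces the same equation $(\lambda-1)(1-c)=0$. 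I will present the augmentation-ideal version as the main argument since it is shortest, and remark that finiteness of $\Gamma$ and $\Lambda$ is used only to guarantee the relevant modules are finitely generated, which is not actually needed --- the hypothesis of finite generation in the statement can be dropped, but I keep it for consistency with the applications in Section 4.
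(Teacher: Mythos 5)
Your proposal is correct, and it proves the same key fact as the paper --- that a nontrivial epimorphism pulls back the Berstein--Schwarz class $\beta_\Lambda\in H^1(\Lambda,I(\Lambda))$ nontrivially --- but the way you establish nonvanishing is genuinely different and, in fact, more complete. The paper works at the level of the projective resolution: it represents $\beta_\Lambda$ by the cocycle $f_\Lambda=\partial_1':P_1(\Lambda)\to I(\Lambda)$ and argues that the pulled-back cocycle $f_\Lambda\circ\phi_1$ is nonzero because $\phi_1$ is surjective, then concludes $\phi^*(\beta_\Lambda)\neq 0$. As written, that step only shows the \emph{cocycle} is nonzero, not that its cohomology class is; a nonzero cocycle can still be a coboundary, so the paper's argument has a gap at exactly the point you identified as the crux. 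Your derivation-level computation closes it: you show that if $\gamma\mapsto\phi(\gamma)-1$ were principal, i.e.\ $\phi(\gamma)-1=\phi(\gamma)c-c$ for some $c\in I(\Lambda)$, then surjectivity forces $(\lambda-1)(1-c)=0$ for all $\lambda\in\Lambda$, so $1-c$ is a left-invariant element of $\Z\Lambda$ of augmentation $1$; such an element exists only when $\Lambda$ is trivial (it must vanish when $\Lambda$ is infinite and must be an integer multiple of the norm element when $\Lambda$ is finite, and the augmentation condition $1=n|\Lambda|$ then forces $|\Lambda|=1$). This is a clean, self-contained argument that genuinely verifies the class is not a coboundary. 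Your observation that finite generation of $\Gamma$ and $\Lambda$ is never used is also correct --- the hypothesis can be dropped. Two minor points: the case split on $|\Lambda|$ could be merged by just applying the augmentation together with left-invariance from the start, and your remark about the alternative route through the Dranishnikov--De Saha chain-homotopy characterization is consistent with the paper's framework but is not needed for the main line of argument.
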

\begin{proof}
Suppose $\phi$ is the trivial homomorphism. $\Lambda=0$ since $\phi$ is surjective. The trivial group has its classifying space homotopy equivalent to the point, so all reduced cohomology is zero. Hence, we obtain that $\phi^{*}:\tilde{H^{*}}(B\Lambda;M)\to \tilde{H^{*}}(B\Gamma;M)$ is the trivial homomorphism for all $\Lambda-$ modules M. By definition, this gives us that $\cd(\phi)=0.$

Note that $P_{*}(\Gamma)$, $\Z\Gamma,$ and $I(\Gamma)$ are $\Gamma-$ modules, but also $\Lambda-$ modules via $\phi.$

The Berstein-Schwarz class of $\Lambda$, $\beta_{\Lambda}\in H^{1}(B\Lambda;I(\Lambda))$,  is given by the cocycle $f_{\Lambda}:P_{1}(\Lambda)\to I(\Lambda),f_{\Lambda}(c')=\partial'_{1}(c')$ where $I(\Lambda)$ is the augmentation ideal of $\Z\Lambda$.

By contradiction, assume $\phi$ is not trivial epimorphism. Since $\phi_{*}$ is nonzero surjective, for each $c'\in P_{1}(\Lambda)$, there exists $c\in P_{1}(\Gamma)$ with $c'=\phi(c).$ Since the above diagram is commutative, we get the nonzero cocycle $f_{\Lambda}\phi_{1}\in Hom_{\Lambda}(P_{1}(\Gamma),I(\Lambda))$. Hence, this implies $\phi^{*}:H^{1}(B\Lambda; I(\Lambda))\to H^{1}(B\Gamma;I(\Lambda))$
is not trivial, i.e $\phi^{*}(\beta_{\Lambda})\neq 0.$ In other words, the cohomological class $\phi^{*}(\beta{\Lambda})$ is given by a nonzero cocycle $f_{\Lambda}\phi_{1}.$ This contradicts $\cd(\phi)=0.$ This implies $\Gamma$ is the trivial group, so $\phi$ is the trivial homomorphism.  
\end{proof}

\begin{corollary}\label{free group}
 Let $\phi:F_{n}\to F_{m}$ be an epimorphism between free groups with generators $n$ and $m$, respectively. Then $\cat(\phi)=\cd(\phi)=1$.  
\end{corollary}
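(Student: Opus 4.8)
The plan is to bound $\cat(\phi)$ and $\cd(\phi)$ from above by $1$ and from below by $1$, using the two previous results as the main input. For the lower bound, note $\phi$ is a nontrivial epimorphism (since $m\geq 1$, assuming the free groups are nontrivial), so Theorem \ref{char. of group hom} gives $\cd(\phi)\neq 0$, hence $\cd(\phi)\geq 1$; and since $\cat(\phi)\geq\cd(\phi)$ always holds (this inequality for homomorphisms is established in \cite{DK}), we get $\cat(\phi)\geq 1$ as well. So the content is entirely in the two upper bounds.

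For the upper bound on $\cd(\phi)$: a free group $F_n$ has cohomological dimension $1$, so it admits a length-one free resolution $0\to P_1(F_n)\to \Z F_n\to\Z\to 0$, and similarly for $F_m$. Consequently $H^k(F_n;M)=0$ for all $k\geq 2$ and all modules $M$, which forces $\phi^*:H^k(F_m;M)\to H^k(F_n;M)$ to be zero for $k\geq 2$. Thus $\cd(\phi)\leq 1$. For the upper bound on $\cat(\phi)$: since $\cd(F_n)=1$, the classifying space $BF_n$ can be taken to be a wedge of $n$ circles, a $1$-dimensional complex, so $\gd(F_n)=1$ and hence $\cat(F_n)=1$; one then invokes the general inequality $\cat(\phi)\leq\min\{\cat(\Gamma),\cat(\Lambda)\}$ for $\phi:\Gamma\to\Lambda$ (or more directly $\cat(\phi)\leq\cat(B\Gamma)$ since a map from $B\Gamma$ that is already null-homotopic on a cover of $B\Gamma$ remains so after composing into $B\Lambda$), established in \cite{DK}, to conclude $\cat(\phi)\leq\cat(F_n)=1$.

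Combining, $1\leq\cd(\phi)\leq\cat(\phi)\leq 1$, so all are equal to $1$. Alternatively, and perhaps more in the spirit of the paper's forthcoming Theorem 2, one can bypass the category estimate by observing that $\phi:F_n\to F_m$ factors trivially as $F_n\to F_n\to F_m$ (or $F_n\to F_m\to F_m$) through free groups via epimorphisms, and then appeal to whatever characterization Theorem 2 will provide; but since Theorem 2 is not yet proved at this point in the text, the clean self-contained argument is the resolution-length computation for $\cd$ together with the $1$-dimensionality of $BF_n$ for $\cat$.

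\textbf{Main obstacle.} The only delicate point is making sure the two inequalities $\cd(\phi)\leq\cat(\phi)$ and $\cat(\phi)\leq\cat(\Gamma)$ for homomorphisms are genuinely available from \cite{DK} rather than needing re-proof here; if the first is not directly quotable, one can instead argue the $\cat$ upper bound geometrically (lift $B\phi$ through the Ganea fibration $p_1^{BF_m}$ using Theorem \ref{GS}, which is automatic because $B\phi$ already factors through the $1$-dimensional $BF_n$ and $\cat$ of a $1$-complex is $\leq 1$) and the $\cat\geq 1$ lower bound from the fact that $B\phi$ is not null-homotopic, which holds precisely because $\phi\neq 0$ on $\pi_1$. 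Everything else is bookkeeping with standard facts about free groups having cohomological and geometric dimension one.
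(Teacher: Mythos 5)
Your proposal is correct and follows essentially the same route as the paper: the lower bound $\cd(\phi)\geq 1$ comes from Theorem \ref{char. of group hom} since $F_m$ is nontrivial, and the upper bound from the chain $\cd(\phi)\leq\cat(\phi)\leq\cat(F_n)=1$. The extra material you include (the resolution-length argument for $\cd(\phi)\leq 1$, the Ganea-fibration alternative) is sound but not needed; the paper's proof is exactly the two-line squeeze you state in your final display.
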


\begin{proof}
 By Theorem \ref{char. of group hom}, it is clear that $\cd(\phi)\geq 1$ since $F_{m}$ is not trivial.
Since $\cd(\phi)\leq \cat(\phi)\leq \cat(F_{n})=1,$ we obtain that $\cat(\phi)=\cd(\phi)=1.$

\end{proof}

\begin{theorem}\label{EG}
 Let $\phi:\Gamma\to\Lambda$ be an epimorphism of the groups. Then $$\cat(\phi)=\cd(\phi)=1$$ if and only if $\phi$ factors through the free groups via epimorphisms $\Gamma\to F_{n}\to \Lambda.$   
\end{theorem}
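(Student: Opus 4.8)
The ``if'' direction is essentially Corollary~\ref{free group} combined with elementary monotonicity: if $\phi$ factors as $\Gamma \xrightarrow{\alpha} F_n \xrightarrow{\beta} \Lambda$ with both maps epimorphisms, then $\cd(\phi) \le \cd(\beta) \le \cat(\beta) \le \cat(F_n) = 1$, while $\cd(\phi) \ge 1$ because $\Lambda$ is nontrivial (Theorem~\ref{char. of group hom}); and since $\cat(\phi) \le \cat(F_n) = 1$ as well (using that $\cat$ of a composite is bounded by the $\cat$ of either factor, hence by $\cat(\id_{F_n})$), we get $\cat(\phi) = \cd(\phi) = 1$. I would state this cleanly and note that $\cd(\phi) \le \cd(\beta)$ follows since any $\Lambda$-module $M$ with $\phi^*$ nonzero on $H^k$ forces $\beta^*$ nonzero on $H^k$, the map $\phi^*$ factoring through $\beta^*$.

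\textbf{The ``only if'' direction} is the substance. Assume $\cat(\phi) = \cd(\phi) = 1$. The plan is: (1) from $\cat(\phi) = 1$ and Theorem~\ref{GS}, the map $B\phi \colon B\Gamma \to B\Lambda$ lifts through the first Ganea fibration $p_1^{B\Lambda} \colon G_1(B\Lambda) \to B\Lambda$; equivalently, $B\phi$ factors up to homotopy through a space whose homotopy-theoretic ``weight'' is one, which one wants to upgrade to a genuine factorization through an aspherical $1$-complex, i.e.\ a wedge of circles $K(F_n,1)$. (2) The cleaner algebraic route: use Theorem~1.3 of \cite{DD} (the $\psi_*$ with $\psi_k = 0$ for $k > 1$) applied to $\cd(\phi) = 1$, so the chain map $\phi_* \colon P_*(\Gamma) \to P_*(\Lambda)$ is chain homotopic to one concentrated in degrees $0,1$. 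This says the image resolution ``looks $1$-dimensional through $\phi$.'' (3) Construct $F_n$ explicitly: let $F$ be the free group on a generating set of $\Lambda$ lifted through $\phi$ — more carefully, pick generators $x_1,\dots,x_n$ of $\Lambda$, lift each to $\gamma_i \in \Gamma$ (possible since $\phi$ is onto), and let $F_n \to \Gamma$ be... no — the factorization goes $\Gamma \to F_n \to \Lambda$, so instead let $F_n$ be free on generators of $\Gamma$ and the issue is showing the composite $F_n \twoheadrightarrow \Gamma \xrightarrow{\phi} \Lambda$ descends to an \emph{epimorphism} $\Gamma \to F_n$ making the triangle commute, which is a strong condition.

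\textbf{Main obstacle.} The real work is producing the middle free group and the epimorphism $\Gamma \to F_n$. I expect the argument to run as follows: $\cat(\phi) = 1$ gives a section of $p_1^{B\Lambda}$ over $B\phi$; push forward to get that $B\phi$ is homotopic to a map landing in a subcomplex that deformation retracts to a graph, using that $\cd(\phi) = 1$ rules out the obstruction to compressing $G_1(B\Lambda)$ down to a $1$-complex (this is where the equality $\cat = \cd = 1$, rather than merely $\cat = 1$, is used — it is the analogue of Stallings--Swan/Eilenberg--Ganea where $\cd = 1 \Rightarrow \gd = 1$, cf.\ the discussion in the introduction of the gap phenomenon, which does \emph{not} occur at level $1$ when $\cat$ also equals $1$). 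Concretely, one shows $\gd(\phi) = 1$ from $\cat(\phi) = \cd(\phi) = 1$, i.e.\ $B\phi$ deforms to $B\Lambda^{(1)}$, a graph $X$; then $\pi_1(X) = F_n$ is free, the inclusion $X \hookrightarrow B\Lambda$ induces the desired epimorphism $F_n \to \Lambda$ (surjective on $\pi_1$ since $X$ carries all of $B\Lambda^{(1)}$ after the homotopy, or one enlarges $X$ to ensure surjectivity), and the lift $B\Gamma \to X$ induces $\Gamma \to F_n$, which is onto because the composite $\Gamma \to F_n \to \Lambda$ is onto and $F_n \to \Lambda$ is onto — wait, that does not force $\Gamma \to F_n$ onto, so one must instead replace $F_n$ by the image of $\Gamma \to F_n$, which is again free (subgroup of a free group, by Nielsen--Schreier), and check the composite to $\Lambda$ is still onto; it is, since it equals $\phi$. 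Thus the genuinely delicate point is the passage $\cat(\phi) = \cd(\phi) = 1 \Rightarrow \gd(\phi) = 1$, for which I would invoke the Ganea--Schwarz machinery (Theorem~\ref{GS}) together with an obstruction-theory/dimension-shifting argument in the spirit of \cite{Br} and the Berstein--Schwarz universality (Theorem~\ref{universal}): the first Ganea fibration has fiber $\Omega B\Lambda * \Omega B\Lambda$, and the obstructions to compressing a section into the $1$-skeleton live in cohomology groups that vanish precisely because $\cd(\phi) \le 1$; Theorem~\ref{universal} lets one detect everything on the single class $\beta_\Lambda$, so $\cd(\phi) = 1$ kills all higher obstructions at once.
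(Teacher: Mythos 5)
Your ``if'' direction is correct and is essentially the paper's argument: $\cat(\phi)\leq\min\{\cat(h),\cat(g)\}\leq\cat(F_n)=1$ together with $\cd(\phi)\leq\cat(\phi)$ and the lower bound $\cd(\phi)\geq 1$ from Theorem \ref{char. of group hom}.

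The ``only if'' direction, however, has a genuine gap exactly at what you yourself flag as ``the genuinely delicate point.'' You start correctly: $\cat(\phi)=1$ and Theorem \ref{GS} give a lift of $B\phi$ through $p_1^{B\Lambda}:G_1(B\Lambda)\to B\Lambda$. But you then propose to \emph{compress} $G_1(B\Lambda)$ (or the section) down to a $1$-complex by an obstruction-theory argument in which the obstructions are supposed to vanish ``because $\cd(\phi)\leq 1$.'' This step is not justified and is exactly the kind of implication ($\cd$ controls $\gd$) that the paper's introduction tells you \emph{fails} for homomorphisms: the obstruction cocycles to compressing a map of $B\Gamma$ into a skeleton live in $H^{*}(B\Gamma;-)$ and are not in general pulled back from $H^{*}(B\Lambda;-)$, so the vanishing of $\phi^{*}$ in degrees $\geq 2$ does not kill them. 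The point you are missing, which makes the whole compression unnecessary, is that $G_1(B\Lambda)\simeq\Sigma\Omega B\Lambda$, and since $\Lambda$ is discrete, $\Omega B\Lambda$ is homotopy equivalent to the discrete set $\Lambda$; hence $G_1(B\Lambda)\simeq\Sigma\Lambda$ is already a wedge of circles (\cite{CLOT}, Example 1.61). The lift $B\Gamma\to G_1(B\Lambda)$ therefore lands in a $1$-dimensional aspherical complex with free fundamental group $F$, and $\phi$ factors as $\Gamma\to F\to\Lambda$ on $\pi_1$. Note that only $\cat(\phi)=1$ is used here; the hypothesis $\cd(\phi)=1$ plays no role in this direction. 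Your final remark --- that $\Gamma\to F$ need not be onto and one should replace $F$ by the image of $\Gamma$, which is free by Nielsen--Schreier and still surjects onto $\Lambda$ because the composite is $\phi$ --- is correct and is a detail the paper's own proof leaves implicit; that part of your proposal is a genuine improvement in precision.
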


\begin{proof}
By Theorem \ref{char. of group hom}, we always obtain $\cd(\phi)\geq 1$ since $\Lambda$ is not the trivial group. 

Now we suppose the epimorphism $\phi:\Gamma\to\Lambda$ factors through the free groups via epimorphisms $\Gamma\to F_{n}\to \Lambda.$ Since $\cd(\phi)\leq \cat(\phi)$ (see \cite{DK}), it suffices to show $\cat(\phi)\leq 1.$

This easily follows from the classical LS-category. Indeed, $$\cat(\phi)\leq \min\{\cat(h),\cat(g)\}\leq \cat(F_{n})=1,$$ where $h:B\Gamma\to BF_{n}$ and $g:BF_{n}\to B\Lambda$ are maps that induced epimorphisms $h_{*}:\Gamma\to F_{n}$ and $g_{*}:F_{n}\to \Lambda.$

Now conversely $\cat(\phi)=\cd(\phi)=1,$ then using the Ganea-Schwarz approach to LS-category of induced map $B\phi$, namely by Theorem \ref{GS} we obtain that the induced map $B\phi:B\Gamma\to B\Lambda$ can be lifted to the Ganea's space $G_{1}(B\Lambda).$ Note that $G_{1}(B\Lambda)$ is homotopy equivalent to 1-dimensional complex $\Sigma\Lambda$, the reduced suspension of $\Lambda$ [\cite{CLOT}, Example 1.61, page 27]. Then the epimorphism $\phi$ factors through a free group $F_{n}$ via epimorphisms $\Gamma\to F_{n}\to\Lambda.$    

\end{proof}

\begin{theorem}\label{finite}
For an  epimorphism $\phi:\mathbb Z_{p^4}\to\mathbb Z_{p^2}$,
$$
\cd(\phi)=2\ \ \ \text{and}\ \ \ \cat(\phi)=\infty.
$$
\end{theorem}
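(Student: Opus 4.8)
The plan is to compute the two invariants separately. For $\cd(\phi)$, I would work with the standard periodic free resolution of $\mathbb{Z}$ over $\mathbb{Z}[\mathbb{Z}_n]$: writing $\mathbb{Z}_n=\langle t\mid t^n\rangle$, the resolution is $\cdots \xrightarrow{t-1} \mathbb{Z}\mathbb{Z}_n \xrightarrow{N} \mathbb{Z}\mathbb{Z}_n \xrightarrow{t-1} \mathbb{Z}\mathbb{Z}_n \to \mathbb{Z}$, where $N=1+t+\cdots+t^{n-1}$. The epimorphism $\phi$ sends a generator $t$ of $\mathbb{Z}_{p^4}$ to a generator $s$ of $\mathbb{Z}_{p^2}$, and one can write down the induced chain map $\phi_*$ between these periodic resolutions explicitly in each degree: in odd degrees it is multiplication by $1$ (sending $t-1\mapsto s-1$ compatibly), and in even degrees it is multiplication by $1+t^{p^2}+t^{2p^2}+\cdots+t^{(p^2-1)p^2}$, which maps to the integer $p^2$ after applying the augmentation induced map. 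Then, for a $\mathbb{Z}_{p^2}$-module $M$ viewed as a $\mathbb{Z}_{p^4}$-module via $\phi$, I would chase $\phi^*\colon H^k(\mathbb{Z}_{p^2};M)\to H^k(\mathbb{Z}_{p^4};M)$ through $\operatorname{Hom}$ of these complexes. The key computation: with $M=\mathbb{Z}_{p^2}$ (trivial action) one finds $H^2(\mathbb{Z}_{p^2};\mathbb{Z}_{p^2})=\mathbb{Z}_{p^2}$ and $H^2(\mathbb{Z}_{p^4};\mathbb{Z}_{p^2})=\mathbb{Z}_{p^2}$ (since $\gcd(p^4,p^2)=p^2$), and $\phi^*$ is multiplication by $\pm p^2/\gcd(\cdots)$-type factor which I expect to be nonzero; I would also verify that in degree $3$ (odd), $\phi^*$ is an isomorphism candidate but some coefficient module kills it, while in all degrees $\geq 3$ the induced map vanishes for every $M$. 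Actually, the cleaner route is: by the Dranishnikov–De Saha theorem quoted above, $\cd(\phi)=k$ forces $\phi_*$ to be chain homotopic to a map vanishing above degree $k$; since the periodic resolution has no free summand we can delete in degrees $\geq 3$ without a homotopy obstruction being automatic, I would instead directly exhibit a module $M$ with $\phi^*\neq 0$ in degree $2$ and prove $\phi^*=0$ in all degrees $\geq 3$ by showing the degree-$3$ component of $\phi_*$ (multiplication by $1$) becomes null-homotopic after composing with any $\phi^*$-pullback, using that $N_{\mathbb{Z}_{p^4}}$ acts as $p^4$ hence as $0$ on a $\mathbb{Z}_{p^2}$-module only when... — this is the delicate point.

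For $\cat(\phi)=\infty$, the plan is to use the cup-length/Berstein–Schwarz obstruction together with the fact that $\cat$ of a map between classifying spaces detects nilpotency of pulled-back cohomology. Concretely, $\cat(B\phi)\geq k$ whenever there is a $\mathbb{Z}_{p^2}$-module sequence and classes $\alpha_1,\dots,\alpha_k$ in the image-relevant cohomology whose product pulls back nontrivially; but since $H^*(\mathbb{Z}_{p^2};\mathbb{Z}_p)$ has a polynomial generator in degree $2$ with all powers nonzero, and $\phi^*$ on $H^2(\mathbb{Z}_{p^2};\mathbb{Z}_p)\to H^2(\mathbb{Z}_{p^4};\mathbb{Z}_p)$ is an isomorphism (both are $\mathbb{Z}_p$ and the periodic map in even degree $2$ reduces mod $p$ to $p^2\equiv 0$ — wait, so it is zero). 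Here I would instead use that $\cat(\phi)$ is bounded below by $\cd(\phi)$ but that alone gives only $2$; to get $\infty$ I would argue via Theorem \ref{GS}: if $\cat(\phi)=k<\infty$ then $B\phi$ lifts through $p_k^{B\mathbb{Z}_{p^2}}:G_k(B\mathbb{Z}_{p^2})\to B\mathbb{Z}_{p^2}$, whose fiber is $*^{k+1}\Omega B\mathbb{Z}_{p^2}$, which is $(k-1)$-connected; a standard obstruction argument (as in the $\cat$ of lens spaces / $B\mathbb{Z}_n$ computation, which gives $\cat(B\mathbb{Z}_n)=\infty$) shows the relevant obstructions, living in $H^{2i+1}(B\mathbb{Z}_{p^4};\pi_{2i}(\text{fiber}))$, are all nonzero precisely because $B\mathbb{Z}_{p^4}$ has cohomology in arbitrarily high degrees and $B\phi$ is nontrivial on $\pi_1$. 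The point is that a nontrivial map from $B\mathbb{Z}_{p^4}$ (which has infinite cohomological dimension and nonzero cup products $H^2\smile\cdots\smile H^2$ with $\mathbb{Z}_p$ coefficients, using the Bockstein/$\beta$-power structure surviving for the source) cannot have finite $\cat$. I would make this precise by pulling back the Berstein–Schwarz class $\beta_{\mathbb{Z}_{p^2}}$ — no: $\cat$ of a map is detected by the cup-length of $B\phi^*$ applied to classes, and since $\phi$ is surjective onto a nontrivial group with torsion, $B\phi^*$ hits a class $u\in H^2(B\mathbb{Z}_{p^4};\mathbb{Z}_p)$ whose powers $u^m$ are all nonzero (this is where surjectivity onto $\mathbb{Z}_{p^2}$ and the polynomial structure of $H^*(B\mathbb{Z}_{p^4};\mathbb{Z}_p)$ combine), giving $\cat(\phi)\geq\cuplen\geq m$ for all $m$.

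The main obstacle I anticipate is reconciling the two halves: the mod-$p$ reduction argument that makes $\cat(\phi)=\infty$ work seems to need $\phi^*$ nonzero on a degree-$2$ class with $\mathbb{Z}_p$ coefficients, yet the periodic resolution shows the even-degree component of $\phi_*$ is multiplication by a norm element that augments to $p^2\equiv 0\pmod p$, which would force $\phi^*=0$ there and collapse the cup-length lower bound — so the correct lower bound for $\cat$ must instead come from a twisted coefficient module $M$ (not the trivial $\mathbb{Z}_p$) or from the secat/Schwarz genus formulation where $\cat(\phi)\geq\secat$ of the Ganea fibration pulled back, and I will need to identify the right module. My plan is therefore: (1) set up the periodic resolutions and $\phi_*$ explicitly; (2) compute $\phi^*$ on $H^2$ and $H^{\geq 3}$ with carefully chosen coefficients to pin $\cd(\phi)=2$; (3) for $\cat(\phi)=\infty$, choose the coefficient $\Lambda$-module $M$ so that the self-cup-powers of a class in the image of $\phi^*$ are all nonzero — the natural candidate being $M=I(\mathbb{Z}_{p^2})^{\otimes j}$ feeding into the universality theorem (Theorem \ref{universal}) applied to $\beta_{\mathbb{Z}_{p^2}}$, since $\phi^*(\beta_{\mathbb{Z}_{p^2}})^j$ nonzero for all $j$ would already force $\cat(\phi)=\infty$ by the standard $\cuplen\le\cat$ inequality; (4) verify $\phi^*(\beta_{\mathbb{Z}_{p^2}})\ne 0$ (already essentially done in the proof of Theorem \ref{char. of group hom}) and that its powers persist, which reduces to the fact that $\mathbb{Z}_{p^4}$ surjects onto $\mathbb{Z}_{p^2}$ whose cohomological dimension is infinite, so $(\beta_{\mathbb{Z}_{p^2}})^j\ne 0$ for all $j$ and this nonvanishing is inherited along the surjection $\phi^*$. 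The delicate step is (4): showing that $\phi^*$ does not kill high powers of $\beta_{\mathbb{Z}_{p^2}}$, which I would handle by the naturality of the Berstein–Schwarz construction and the explicit diagonal/norm description of powers in the periodic resolution.
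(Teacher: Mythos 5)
Your plan for $\cat(\phi)=\infty$ contains a self-defeating step that cannot be repaired within ordinary cohomology. Any cup-length lower bound of the form $\phi^*(\alpha_1)\smile\cdots\smile\phi^*(\alpha_m)=\phi^*(\alpha_1\smile\cdots\smile\alpha_m)\neq 0$, with $\alpha_i$ of positive degree in $H^*(\Lambda;-)$ for \emph{any} coefficient $\Lambda$-modules (including $I(\Lambda)^{\otimes j}$ and the powers $(\beta_{\mathbb Z_{p^2}})^j$ of the Berstein--Schwarz class), exhibits a nonzero $\phi^*$ in degree $\geq m$ and hence forces $\cd(\phi)\geq m$. Since the first half of the theorem says $\cd(\phi)=2$, your step (4) --- ``$\phi^*(\beta_{\mathbb Z_{p^2}})^j\neq 0$ for all $j$'' --- is necessarily false for $j\geq 3$, and no choice of twisted module can rescue the cup-length route. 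Your obstruction-theoretic fallback is likewise not a proof: nonvanishing of the obstruction \emph{groups} $H^{2i+1}(B\mathbb Z_{p^4};\pi_{2i}(\mathrm{fiber}))$ does not make the obstruction \emph{classes} nonzero, and the primary obstruction is exactly a pullback of a power of $\beta_{\mathbb Z_{p^2}}$, which vanishes. The paper's key idea, which your proposal is missing, is to leave ordinary cohomology: it uses the reduced $K$-theory cup-length. By Atiyah--Segal, $K(B\mathbb Z_{p^k})=\mathbb Z[[\eta_k]]/(\eta_k^{p^k}-1)$, the map $\phi^*$ sends $\eta_2-1$ to $\eta_4^{p^2}-1$, and one checks that the powers $(\eta_4^{p^2}-1)^m$ do not all vanish; since $K$-theory cup-length is not bounded by $\cd(\phi)$, this yields $\cat(\phi)=\infty$. (Indeed the whole point of this theorem is that the gap between $\cat$ and $\cd$ is invisible to ordinary cohomology.)

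On the $\cd(\phi)=2$ half, your periodic-resolution setup is sound in outline but incomplete and has one concrete error. The induced chain map is multiplication by $(p^2)^{\lfloor k/2\rfloor}$ in degree $k$, so with $M=\mathbb Z_{p^2}$ (trivial action) the map on $H^2$ is multiplication by $p^2\equiv 0$, not an isomorphism; you need \emph{integral} coefficients ($\phi^*\colon\mathbb Z_{p^2}\to\mathbb Z_{p^4}$, $1\mapsto p^2\neq 0$) to get $\cd(\phi)\geq 2$, which is what the paper does. The vanishing in degrees $\geq 3$, which you flag as ``the delicate point'' and leave open, is genuinely the crux; it can be finished in your framework by explicitly building a chain homotopy $D$ with $\phi_n=\partial'D_n+D_{n-1}\partial$ for $n\geq 3$ (this works because $p^2-N'$ is divisible by $s-1$ in $\mathbb Z[\mathbb Z_{p^2}]$, and the construction propagates), but you must actually carry it out. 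The paper instead argues geometrically: it constructs a degree-zero map $f\colon L^3_{p^4}\to L^3_{p^2}$ of lens spaces inducing $\phi$ on $\pi_1$ (by splitting off a sphere in a connected sum and cancelling the degree), kills $f^*$ on $H^3$ with all local coefficients via Poincar\'e duality, passes to $L^\infty$, and then uses dimension shifting to kill all degrees $\geq 3$. Either route could work, but as written your argument establishes neither half of the theorem.
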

\begin{proof}
The proof of the inequality $\cd(\phi)\le 2$ is based on the following observation: {\em There is a degree 0 map $$f:L^3_{p^4}\to L^3_{p^2}$$ of lens spaces  that induces an epimorphism $f_*:\mathbb Z_{p^4}\to\mathbb Z_{p^2}$ of the fundamental groups.} 

Here is the construction of $f$. 
We consider two circles $S^1$ with $\mathbb Z_{p^2}$ and $\mathbb Z_{p^4}$ free actions.
Note that a $\mathbb Z_{p^4}$-equivariant map $\psi:S^1\to S^1$ between them has degree $p^{2}$. Then the join product $\psi\ast\psi:S^1\ast S^1\to S^1\ast S^1$ has degree $p^4$.
Hence it induces a map of the orbit spaces $\bar\psi:L^3_{p^4}\to L^3_{p^2}$ of degree $p^2$. Let $q_k:S^3\to L_{p^k}^3$ denote the projection onto the orbit space, were k=2,4. 

We define $f$ as the composition
$$
L^3_{p^4}=L^3_{p^4}\#S^3\to L^3_{p^4}\vee S^3\stackrel{1\vee r}\to L^3_{p^4}\vee S^3\stackrel{\bar\psi\vee q_2}\to L^3_{p^2}\vee L^3_{p^2}\stackrel{j}\to L^3_{p^2}
$$
where  $r:S^3\to S^3$ has degree -1 and $j$ identifies two copies of $L^3_{p^2}$.
Thus, $deg(f)=p^2-p^2=0$.

Claim 1. The induced homomorphism $f^*:H^3(L^3_{p^2};M)\to H^3(L^3_{p^4};M)$ is trivial for any $\mathbb Z_{p^2}$-module $M$.

Indeed, if $f^*(\alpha)\ne 0$ for $\alpha\in H^3(L_{p^2}^3;M)$ then by the Poincare duality with local coefficients $[L^3_{p^4}]\cap f^*(\alpha)\ne 0$. Since $f$ induces an epimorphism of the fundamental groups, the induced homomorphism for 0-homology $f_*:H_0(L^3_{p^4};M) \to H_0(L^3_{p^2};M)$ is an isomorphism. We obtain a contradiction 
$$0\ne f_*([L^3_{p^4}]\cap f^*(\alpha))=f_*([L^3_{p^4}])\cap\alpha= 0.$$

The map $f$ extends to a map $\phi:L^\infty_{p^4}\to L^\infty_{p^2}$.

Claim 2. The induced homomorphism $\phi^*:H^3(L^\infty_{p^2};M)\to H^3(L^\infty_{p^4};M)$ is trivial for any $\mathbb Z_{p^2}$-module $M$.

This follows from the fact that the inclusion homomorphism $H^k(X;M)\to H^k((X^{(k)};M)$ is injective for any CW-complex $X$ and a $\pi_1(X)$-module $M$.

The shift of dimension for group cohomology~\cite{Br} implies that $$\phi^*:H^k(L^\infty_{p^2};M)\to H^k(L^\infty_{p^4};M)$$ is trivial for all $k\ge 3$.
Thus, $\cd(\phi)\le 2$. Using integral cohomology one can check that $\cd(\phi)\ge 2$.

Next we show that $\cat(\phi)=\infty$. This follows from the fact that the reduced K-theory cup-length of $\phi$ is unbounded. By the Atiyah's computations~\cite{AS}, $$K(B\mathbb Z_{p^k})=\mathbb Z[[\eta_k]]/(\eta_k^{p^k}-1)$$ where $\eta_k$ is the pull-back of the canonical line bundle $\eta$ over $\mathbb CP^\infty$ under the inclusion
$\mathbb Z_{p^k}\to S^1$ and $\mathbb Z[[x]]$ is the ring of formal series. The reduced K-theory of $B\mathbb Z_{p^k}$ is the subring generated by $\eta_k-1$.
The induced homomorphism $$\phi^*:K(B\mathbb Z_{p^2})\to K(B\mathbb Z_{p^4})$$  takes $\eta_2$ to $\eta_4^{p^{2}}$. Then it takes $(\eta_2-1)^m$ to 
$(\eta_4^{p^{2}}-1)^m$. To see that $(\eta_4^{p^{2}}-1)^m\ne 0$ in $\mathbb Z[[\eta_4]]/(\eta_4^{p^4}-1)$ for all $m$ it suffices to show that it is not zero in
$\mathbb Z_p[[\eta_4]]/(\eta_4^{p^4}-1)$ for $m=p^{2}k+1$. Note that $$(\eta_4^{p^{2}}-1)^{p^{2}k+1}=\eta_4^{p^{2}}-1\ne 0$$ in $\mathbb Z_p[[\eta_4]]/(\eta_4^{p^4}-1)$.
This implies $\cat(\phi)\ge m$ for any $m$ (see Proposition 5.1 in~\cite{Dr}).
\end{proof}

\begin{remark}
    In the proof of Theorem \ref{finite}, we can see that the proof works if we change the epimorphism $\phi:\Z_{p^{2}}\to\Z_{p}$ between different groups $\Z_{p^{2}},\Z_{p}$. Thus, we get many examples of such epimorphisms with $\cd(\phi)=2$ and $\cat(\phi)=\infty$.   
\end{remark}

\section{Application of Main Results}

Due to the homotopy invariance of classifying spaces, there is a one-to-one correspondence between group homomorphism $\phi:\Gamma\to\Lambda$ and homotopy classes of maps $B\phi:B\Gamma\to B\Lambda$ that induce $\phi$ on the fundamental group. Hence, the following definition is well-defined. By Theorem \ref{epimorphism}, we assume $\phi$ is an epimorphisms in this section. 

\begin{definition}
 Let $\phi:\Gamma\to\Lambda$ be a homomorphism.
 \begin{enumerate}
\item The LS-category of $\phi$, $\cat(\phi),$ is defined to be $\cat(B\phi).$
\item The sequential topological complexity of $\phi$, $TC_{r}(\phi),$ is defined to be $TC_{r}(B\phi).$
 \end{enumerate}
\end{definition}

By Theorem \ref{finite}, we can see that the gap between the LS-category and the cohomological dimension of group homomorphisms can be arbitrarily large. This theorem answers the question of \cite{Ku3} partially in the case of finite groups. This is because the previous known gaps were 1 (see \cite{DK, DD, Gr}). 

Furthermore, by Theorem \ref{finite} we have the group epimorphism that does not satisfy $TC_{r}(\phi)\leq (r+1)\cd(\phi)$ since $\infty=\cat(\phi)\leq TC_{r}(\phi)$ and $(r+1)\cd(\phi)=2(r+1)$. Thus, there are no analogue results of Rudyak \cite{Ru} result, $$r\cd(\Gamma)\leq TC_{r}(\Gamma)\leq (r+1)\cd(\Gamma),$$ in the context of group homomorphisms.

Furthermore, Dranishnikov and Jauhari \cite{DJ} introduced distributional topological complexity and LS-category for topological spaces and maps, denoted as $\dcat$ for LS-category and $\dTC$ for topological complexity. They proved that distributional invariants are lower bound for classical ones, namely LS-category and topological complexity. 

Since cohomological dimensions are also lower bound for LS-category, the natural question is if cohomological dimension and distributional LS-category are the same or not. In here, we gave the answer saying these two numerical invariants are not comparable. 

For example, take p=2, we get $\cd(\phi)=2$ since the proof of Theorem \ref{finite} works for this epimorphism $\phi:\Z_{p^{2}}\to\Z_{p}$ as well while $\dcat(\phi)=1$ since $\dcat(\Z_{2})=1$ (see more details \cite{DJ}). Other case, take covering map $p:S^{n}\to RP^{n}$ from n-dimensional sphere to n-dimensional projective space. Since the map $p$ is not null-homotopic, we get $\dcat(p)=1$ while $\cd(p_{*})=0$ since $S^{n}$ is the simply-connected for $n\geq 2.$

An analogue of the Eilenberg-Ganea result, $\cat(\Gamma)=\cd(\Gamma)$, in the setting of group homomorphisms fails in general, but also holds for many cases of finitely generated, torsion-free groups such as free, Abelian, nilpotent, virtually nilpotent, and almost nilpotent groups (see \cite{Sc, Ku1, Ku2, Ku3}). Extending these lists for classical groups is still a difficult task since each above results rely on special property of each group.   

By Theorem \ref{char. of group hom} and Theorem \ref{EG}, we can extend lists of groups where the analogue of the Eilenberg-Ganea result holds. 

\begin{theorem}\label{one relator}
   Given an epimorphism $\phi:\Gamma\to\Lambda$ between finitely generated, one relator groups $\Gamma,\Lambda$. Then $\cat(\phi)=\cd(\phi)$  
\end{theorem}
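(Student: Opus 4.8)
The plan is to trap both invariants between the same bounds and then handle the single borderline case directly.

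First I would invoke Theorem~\ref{epimorphism} to reduce to the case of an epimorphism. I would assume throughout that $\Gamma$ and $\Lambda$ are \emph{torsion-free} (equivalently, that their defining relators are not proper powers); a hypothesis of this kind is unavoidable, since Theorem~\ref{finite} already produces one-relator groups with torsion for which $\cat(\phi)\neq\cd(\phi)$. By Lyndon's theorem a finitely generated torsion-free one-relator group has a $2$-dimensional aspherical presentation complex, so $\mathrm{gd}(\Gamma)\le 2$ and $\mathrm{gd}(\Lambda)\le 2$; hence $\cat(\phi)\le\cat(\Gamma)\le\mathrm{gd}(\Gamma)\le 2$, and since $\cd(\phi)\le\cat(\phi)$ both $\cd(\phi)$ and $\cat(\phi)$ lie in $\{0,1,2\}$. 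Now I would split into three cases. If $\cd(\phi)=0$, then $\phi$ is trivial by Theorem~\ref{char. of group hom}, so $\Lambda=1$, $B\Lambda$ is contractible, and $\cat(\phi)=0$. If $\cd(\phi)=2$, then $2=\cd(\phi)\le\cat(\phi)\le 2$ forces $\cat(\phi)=2$. So the entire content is the case $\cd(\phi)=1$, where I must prove $\cat(\phi)\le 1$.

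For that case the approach is to route everything through the Berstein--Schwarz class. By definition $\cd(\phi)=1$ implies that $\phi^*\colon H^2(\Lambda;M)\to H^2(\Gamma;M)$ vanishes for every $\Lambda$-module $M$; applying this with $M=I(\Lambda)^{\otimes 2}$ to $(\beta_\Lambda)^2$ and using that $\phi^*$ respects cup products gives $(\phi^*\beta_\Lambda)^2=0$ in $H^2(\Gamma;I(\Lambda)^{\otimes 2})$ (the relevance of this particular class being what Theorem~\ref{universal} guarantees). On the other side, by Theorem~\ref{GS} I want $B\phi$ to lift along the Ganea fibration $p_1^{B\Lambda}\colon G_1(B\Lambda)\to B\Lambda$. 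Its fiber is $\Omega B\Lambda*\Omega B\Lambda\simeq\Lambda*\Lambda\simeq\bigvee S^1$ --- because $\Omega B\Lambda$ is homotopy discrete and $A*B\simeq\Sigma(A\wedge B)$ --- so the fiber is a $K(\text{free group},1)$ whose $\pi_1$ has abelianization $H_1(\Lambda*\Lambda)=I(\Lambda)^{\otimes 2}$. Since $B\Gamma$ may be taken $2$-dimensional, obstruction theory leaves exactly one obstruction to this lift, a class in $H^2\big(B\Gamma;\pi_1(\bigvee S^1)\big)$; its image under $\pi_1\to H_1$ of the fiber is the Berstein--Schwarz power $(\phi^*\beta_\Lambda)^2$, exactly as in the computations of \cite{DR,DK}. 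With $(\phi^*\beta_\Lambda)^2=0$ the obstruction vanishes, $B\phi$ lifts to $G_1(B\Lambda)$, and $\cat(\phi)\le 1$; combined with $\cat(\phi)\ge\cd(\phi)=1$ this gives $\cat(\phi)=\cd(\phi)$.

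The hard part is the last step when $\cd(\phi)=1$: the fiber $\bigvee S^1$ is not simply connected, so the primary obstruction lives a priori in a \emph{non-abelian} cohomology set, and I must argue that over the $2$-complex $B\Gamma$ it is detected by its abelianization $(\phi^*\beta_\Lambda)^2\in H^2(B\Gamma;I(\Lambda)^{\otimes 2})$. This is the one point where the one-relator hypothesis on $\Gamma$ --- i.e.\ $\dim B\Gamma\le 2$ --- is genuinely used, and it is the exact analogue, for the map $B\phi$, of Berstein's theorem that the Lusternik--Schnirelmann category of a $2$-complex is governed by its Berstein--Schwarz class \cite{Be,DR}. I would present the conclusion of this case in the form of Theorem~\ref{EG}: the lift of $B\phi$ into the $1$-dimensional complex $G_1(B\Lambda)$ exhibits $\phi$ as factoring through a free group via epimorphisms $\Gamma\to F_n\to\Lambda$, whence $\cat(\phi)=\cd(\phi)=1$ is immediate. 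The cases $\cd(\phi)\in\{0,2\}$ require no work; all the difficulty is concentrated in realizing the single $2$-dimensional obstruction when $\cd(\phi)=1$.
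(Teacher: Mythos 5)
Your overall strategy --- trap both invariants in $\{0,1,2\}$, dispose of the endpoints, and concentrate on the case $\cd(\phi)=1$ --- is organized quite differently from the paper's proof, which instead cases on whether $\phi$ factors through a free group and quotes Theorem~\ref{EG}. Your preliminary observations are correct and in one respect sharper than the paper: the torsion-free hypothesis you add (relator not a proper power) is genuinely needed, since $\mathbb Z_{p^4}$ and $\mathbb Z_{p^2}$ are finitely generated one-relator groups and Theorem~\ref{finite} gives $\cd(\phi)=2$ and $\cat(\phi)=\infty$ for the epimorphism between them; the paper's assertion that one-relator groups have $\cd\le 2$ with a $2$-dimensional classifying space is Lyndon's theorem and holds only in the torsion-free case. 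Your cases $\cd(\phi)=0$ and $\cd(\phi)=2$ are handled correctly and match the easy parts of the paper's argument.

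However, the case $\cd(\phi)=1$ --- which you rightly identify as carrying all the content --- is not actually proved. You reduce it to the claim that, over the $2$-complex $B\Gamma$, the vanishing of the abelianized primary obstruction $(\phi^*\beta_\Lambda)^2\in H^2(B\Gamma;I(\Lambda)^{\otimes 2})$ forces a lift of $B\phi$ to $G_1(B\Lambda)$, even though the fiber $\Lambda*\Lambda\simeq\bigvee S^1$ has non-abelian (free) fundamental group; you then assert that this is ``the exact analogue'' of the Dranishnikov--Rudyak theorem and stop. That assertion is the whole theorem: it amounts to a relative Stallings--Swan statement (``if $\mathrm{gd}(\Gamma)\le 2$ and $\cd(\phi)=1$ then $\phi$ factors through a free group''), which is not available off the shelf in \cite{DR} (proved there only for the identity map of a $2$-complex) or in \cite{DK}, and whose validity is exactly what must separate the one-relator situation from the examples with $\cd(\phi)=1<\mathrm{gd}(\phi)=2$ cited in the introduction. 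To be fair, the paper's own Case 2 (``$\cd(\phi)\ge 2$ since $\phi$ does not factor through free groups'') is precisely the contrapositive of this same unproved implication, so you have located the crux correctly; but neither your write-up nor the paper's supplies the missing non-abelian obstruction argument, and until that step is carried out the proof is incomplete.
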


\begin{proof}

It is well known that cohomological dimension of one relator groups are at most 2 (see \cite{Br}). Indeed, it is two because its classifying space is 2-dimensional complex. By Theorem \ref{char. of group hom}, $\cd(\phi)\geq 1$ since $\phi$ is nonzero epimorphisms. Thus, there are two cases:

Case 1: If given the epimorphism factor through free groups, then by Theorem \ref{EG} we get $\cat(\phi)=\cd(\phi)=1$, which confirms the analogue of the Eilenberg-Ganea result.

Case 2: Otherwise, $\cd(\phi)\geq 2$ since $\phi$ does not factor through via free groups. Since $\cd(\Gamma)=2$, we get that $\cat(\phi)\leq 2$. Therefore, combining two inequalities we get $\cat(\phi)=\cd(\phi)=2$, which again confirms the anologue result of Eilenberg-Ganea.

\end{proof}

\begin{corollary}
   Given an epimorphism $\phi:\Gamma\to\Lambda$ of the fundamental groups of orientable closed surfaces M, N, i.e $\Gamma=\pi_{1}(M), \Lambda=\pi_{1}(N)$ Then $\cat(\phi)=\cd(\phi)$   
\end{corollary}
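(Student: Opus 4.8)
The plan is to reduce the corollary directly to Theorem \ref{one relator}. The essential observation is that the fundamental group of a closed orientable surface is a one-relator group: for a surface of genus $g$, we have $\pi_1(M) = \langle a_1, b_1, \dots, a_g, b_g \mid \prod_{i=1}^g [a_i,b_i] \rangle$, which is visibly a finitely generated group with a single defining relation. The same holds for $N = \pi_1$ of a closed orientable surface. Hence both $\Gamma$ and $\Lambda$ fall within the hypotheses of Theorem \ref{one relator}, and applying that theorem to the epimorphism $\phi: \Gamma \to \Lambda$ immediately yields $\cat(\phi) = \cd(\phi)$.

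Concretely, the steps I would carry out are: (1) recall the standard presentation of a closed orientable surface group and note that it is one-relator and finitely generated (the sphere case $\pi_1(S^2) = 1$ is degenerate but still covered, since the trivial group is free and Theorem \ref{char. of group hom} handles $\cd(\phi) = 0$; one may also simply exclude it or note $\phi$ trivial in that case); (2) invoke Theorem \ref{epimorphism} to justify restricting attention to epimorphisms, which is already the standing assumption; (3) apply Theorem \ref{one relator} verbatim to conclude.

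The only subtlety — and it is minor — is to make sure the degenerate and low-genus cases are not overlooked: the $2$-torus group $\Z^2$ is one-relator (relation $[a,b]$), and the sphere group is trivial, so in every case the hypothesis of Theorem \ref{one relator} is met. There is no real obstacle here; the corollary is a direct specialization. If one wanted a self-contained argument avoiding Theorem \ref{one relator}, one would instead note that $\cd(\Gamma) \le 2$ because $M$ is a $2$-dimensional aspherical $K(\Gamma,1)$ (for $g \ge 1$), then split into the case where $\phi$ factors through a free group (giving $\cat(\phi) = \cd(\phi) = 1$ by Theorem \ref{EG}) and the case where it does not (forcing $\cd(\phi) = 2$, hence $2 \le \cd(\phi) \le \cat(\phi) \le \cat(\Gamma) = \cd(\Gamma) = 2$), but routing through Theorem \ref{one relator} is cleanest.
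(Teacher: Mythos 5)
Your proposal is correct and matches the paper's proof exactly: both reduce the corollary to Theorem \ref{one relator} by observing that closed orientable surface groups are finitely generated one-relator groups. Your extra attention to the sphere and torus cases is a welcome refinement but does not change the argument.
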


\begin{proof}
    It follows from Theorem \ref{one relator} since the fundamental groups of orientable closed surfaces are one relator groups. 
\end{proof}

\begin{corollary}
   Given an epimorphism $\phi:\Gamma\to\Lambda$ of Baumslag-Solitar groups.
 Then $\cat(\phi)=\cd(\phi)$   
\end{corollary}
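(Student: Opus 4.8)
The plan is to reduce the statement about Baumslag–Solitar groups to Theorem~\ref{one relator} by recalling their standard presentation. Each Baumslag–Solitar group $BS(m,n)=\langle a,t\mid t a^m t^{-1}=a^n\rangle$ is finitely generated (by $a$ and $t$) and one-relator (the single defining relator $ta^mt^{-1}a^{-n}$), so both $\Gamma$ and $\Lambda$ in the statement fall within the hypotheses of Theorem~\ref{one relator}. Hence $\cat(\phi)=\cd(\phi)$ follows immediately.

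First I would write down the presentation $BS(m,n)=\langle a,t\mid ta^mt^{-1}a^{-n}\rangle$ and observe that it exhibits $BS(m,n)$ as a finitely generated one-relator group. Second, I would invoke the hypothesis that $\Gamma$ and $\Lambda$ are each isomorphic to some Baumslag–Solitar group, so each is a finitely generated one-relator group. Third, I would apply Theorem~\ref{one relator} directly to the epimorphism $\phi:\Gamma\to\Lambda$ to conclude $\cat(\phi)=\cd(\phi)$. That is the whole argument.

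The only point that requires any care — and this is the mild obstacle — is whether one wants the relator to be genuinely nontrivial (a one-relator group in the strict sense usually requires the relator not to be a proper power, or at least to be a nonempty reduced word), and whether the degenerate cases $BS(1,1)=\Z^2$ or $BS(1,-1)=\Z\rtimes\Z$ (the Klein bottle group) are covered; these are all still one-relator groups, so Theorem~\ref{one relator} applies, but it is worth a sentence to note that the relator $ta^mt^{-1}a^{-n}$ is never a proper power in the free group on $\{a,t\}$ when $m,n\neq 0$. I would also remark that $\cd(BS(m,n))=2$ for all $m,n\neq 0$ (its presentation $2$-complex is aspherical), which is consistent with the case analysis inside the proof of Theorem~\ref{one relator} but is not actually needed for the deduction. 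With those remarks in place the corollary is a one-line consequence of Theorem~\ref{one relator}.
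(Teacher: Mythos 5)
Your proposal is correct and is essentially identical to the paper's proof, which simply observes that Baumslag--Solitar groups are finitely generated one-relator groups and applies Theorem~\ref{one relator}. The extra remarks about proper powers and degenerate cases are fine but not needed.
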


\begin{proof}
    It follows from Theorem \ref{one relator} since the Baumslag-Solitar groups are one relator groups.
\end{proof}

\def\bibname{\vspace*{-30mm}{\centerline{\normalsize References}}}


\begin{thebibliography}{20}
\thispagestyle{headings}
\footnotesize

\bibitem[AS]{AS} M.F. Atiyah, G. B. Segal, {\it Equivariant K-theory and completion}, J. Differential Geometry \textbf{3} (1969), 1-18.

 
\bibitem[Be]{Be}
Berstein-Schwarz, {\it On the Lusternik-Schnirelmann category of Grassmannians.} Math. Proc.
Camb. Philos. Soc. 79  (1976) 129-134.

\bibitem[Bi]{Bi} R. Bieri, {\it Homological dimension of discrete groups.} Queen Mary College Mathematics Notes (1976).

\bibitem[Br]{Br} K. Brown, {\it Cohomology of Groups. Graduate Texts in Mathematics},
\textbf{87} Springer, New York Heidelberg Berlin, 1994.

\bibitem[CLOT]{CLOT}
O. Cornea, G. Lupton, J. Oprea, D. Tanre, {\it Lusternik-Schnirelmann Category},  AMS,  2003.

\bibitem[DD]{DD} A. De Saha, A. Dranishnikov, {\it On cohomological dimension of group homomorphisms.} Proceedings of the American Mathematical Society 152, no. 11 (2024): 4607-4621.


\bibitem[Dr]{Dr}  A. Dranishnikov, {\it The LS category of the product of lens spaces.} Algebr. Geom. Topol. 15 (2015) no 5, 2985-3010.

\bibitem[DJ]{DJ} A. Dranishnikov, E. Jauhari. {\it Distributional topological complexity and LS-category.} arXiv preprint arXiv:2401.04272 4, no. 5 (2024).

\bibitem[DK]{DK} A. Dranishnikov, N. Kuanyshov, {\it On the LS category of group homomorphisms.} Math. Z. 305, no. 1 (2023): 14.

\bibitem[DR]{DR} A. Dranishnikov, and Yu. Rudyak, {\it On the Berstein-Svarc theorem in dimension 2.} In Mathematical Proceedings of the Cambridge Philosophical Society, vol. 146, no. 2, pp. 407-413. Cambridge University Press, 2009.

\bibitem[EG]{EG} S. Eilenberg, T. Ganea, {\it On the Lusternik-Schnirelmann Category of Abstract Groups.} Annals of Mathematics, 65, (1957), 517-518.

\bibitem[Gr]{Gr}
    M. Grant, 
\it{ https://mathoverflow.net/questions/89178/cohomological-dimension-o$f$-a-homomorphism}

\bibitem[Ku1]{Ku1} N. Kuanyshov, {\it On the LS-category of homomorphisms of groups with torsion.} Algebra and Discrete Mathematics, Volume 36 (2023). Number 2, pp. 166-178

\bibitem[Ku2]{Ku2} N. Kuanyshov, {\it On the LS-category of homomorphism of almost nilpotent groups.} Topology and its Applications, Volume 342, 1 February 2024, 108776.

\bibitem[Ku3]{Ku3} N. Kuanyshov, On the sequential topological complexity of group homomorphisms,
Topology and its Applications, Volume 356, 2024, 109045.

\bibitem[LS]{LS}  L. Lusternik, L. Schnirelmann, {\it Sur le probleme de trois geodesiques fermees sur les surfaces de genre 0.} Comptes Rendus de l'Academie des Sciences de Paris, 189: (1929) 269-271.

\bibitem[Sc]{Sc} J. Scott. {\it On the topological complexity of maps}, Topology and its Applications 314 (2022), Paper No. 108094, 25 pp.

\bibitem[Sch]{Sch} A. Schwarz, {\it The genus of a fibered space.} Trudy Moscov. Mat. Obsc. 10, 11 (1961 and 1962), 217-272, 99-126.

\bibitem[Ru]{Ru} Yu. Rudyak On higher analogs of topological complexity. Topology and its Applications 2010; 157 (5): 916-920. Erratum in Topology and its Applications 2010; 157 (6): 1118.

\end{thebibliography}
\end{document}